\newdimen\slantmathcorr
\def\oversl#1{
\setbox0=\hbox{$#1$}
\slantmathcorr=\wd0
\hskip 0.2\slantmathcorr \overline{\hbox to 0.8\wd0{
\vphantom{\hbox{$#1$}}}}
\hskip-\wd0\hbox{$#1$}
}
\newtheorem{theorem}{Theorem}[section]
\newtheorem{Lemma}[theorem]{Lemma} 
\newtheorem{Proposition}[theorem]{Proposition}
\newtheorem{Corollary}[theorem]{Corollary}
\newtheorem{Definition}[theorem]{Definition} 
\newtheorem{Example}[theorem]{Example}
\begin{document}
\title{MEAN ERGODIC SHADOWING}           
\author{Pramod Kumar Das$^1$, Tarun Das$^2$}         
\begin{abstract}
We introduce and study a new variant of shadowing namely mean ergodic shadowing. We establish relationship of this variant with several other variants of shadowing. We show that a minimal system with shadowing cannot have mean ergodic shadowing. We give a necessary and sufficient condition for an orbital limit function to have mean ergodic shadowing property.            
\end{abstract}  
\maketitle 

Keywords: {\footnotesize Shadowing, Ergodic Shadowing, Sub-shadowing, Average Shadowing, Minimality}.       

Mathematics Subject Classifications (2020): {\footnotesize 37B05}         

\section{Introduction} 

Throughout this paper, a dynamical system is a pair $(X,f)$, where $X$ is a compact metric space (To avoid technical difficulties, we assume that $X$ contains at least two points) and $f:X\rightarrow X$ is a continuous map. The concept of shadowing was originated from Anosov closing lemma and because of its rich consequences, shadowing plays an important role in topological dynamics. It is believed that the study of certain variants or generalizations of a particular interesting concept give new insights. This belief gave the impetus to study several variants of shadowing including ergodic shadowing \cite{FG}, sub-shadowing \cite{DH} and average shadowing \cite{WOC}. These variants share a common motivation of studying the behaviour of a dynamical system by exploiting the closeness of approximate orbits and exact orbits. The similarities and differences between shadowing and each of these variants help to understand better the behaviour of a system with shadowing. Another important concept in topological dynamics is minimality. In a minimal system, the orbit of individual point is dense in the phase space. In \cite{FD,S}, authors investigated relationship between the dynamics of a sequence of functions and its limit function. In \cite{KDD}, authors gave a necessary and sufficient condition for a point to be a shadowable point (introduced in \cite{M}) of an orbital limit function.      

Here, we first introduce and study a new variant of shadowing namely mean ergodic shadowing. We establish relationships of this new variant with several other variants of shadowing. We show that a minimal system with shadowing cannot possess mean ergodic shadowing. Finally, we give a necessary and sufficient condition for an orbital limit function to possess mean ergodic shadowing.     
\par\noindent\rule{\textwidth}{0.4pt}
{\footnotesize Pramod Kumar Das, \textit{Email}: pramodkumar.das@nmims.edu\\
Tarun Das, \textit{Email}: tarukd@gmail.com\\
$^1$School of Mathematical Sciences, Narsee Monjee Institute of Management Studies, Vile Parle (West), Mumbai-400056.\\
$^2$Department of Mathematics, Faculty of Mathematical Sciences, University of Delhi, New Delhi-110007.}    

\section{Preliminaries} 

Let $(X,d)$ be a compact metric space and let $f$ be a continuous map on $X$. Let $\mathbb{N}$ be the set of all non-negative integers. The density and upper density of $E\subset \mathbb{N}$ is defined respectively by $d(E)=$lim$_{n\to\infty}\frac{\#(E\cap [0,n-1])}{n}$ and $\overline{d}(E)=$lim sup$_{n\to\infty}\frac{\#(E\cap [0,n-1])}{n}$, where $\#(B)$ denotes the cardinality of $B\subset \mathbb{N}$ and $[0,n-1]=\lbrace 0,1,2,3,...,n-1\rbrace$.       
\medskip

Let $\xi=\lbrace x_i\rbrace_{i\in\mathbb{N}}$ be a sequence in $X$ and $\epsilon$, $\delta$ be positive real numbers. Then,  
\medskip

(i) $\xi$ is said to be a $\delta$-pseudo orbit if $d(f(x_i),x_{i+1})<\delta$ for all $i\in\mathbb{N}$ and $\xi$ is said to be $\epsilon$-shadowed by some $x\in X$ if $d(f^i(x),x_i)<\epsilon$ for all $i\in\mathbb{N}$. $f$ is said to have shadowing if for every $\epsilon>0$, there is $\delta>0$ such that every $\delta$-pseudo orbit is $\epsilon$-shadowed by some point in $X$.    
\medskip

(ii) $\xi$ is said to be a $\delta$-ergodic pseudo orbit if $d(f(x_i),x_{i+1})<\delta$ for all $i\in\mathbb{N}$ except a set of density zero. $\xi$ is said to be $\epsilon$-ergodic shadowed by some $x\in X$ if $d(f^i(x),x_i)<\epsilon$ for all $i\in\mathbb{N}$ except a set of density zero. $\xi$ is said to be $\epsilon$-ergodic shadowed with index $\alpha$ by some $x\in X$ if $\overline{d}(B(x,\xi,\epsilon))>\alpha$, where $B(x,\xi,\epsilon)=\lbrace i\in\mathbb{N}\mid d(f^i(x),x_i)< \epsilon\rbrace$. $f$ is said to have ergodic shadowing \cite{FG} if for every $\epsilon>0$, there is $\delta>0$ such that every $\delta$-ergodic pseudo orbit is $\epsilon$-ergodic shadowed by some point in $X$. $f$ is said to have $M^{\alpha}$-shadowing \cite{WOC} if for every $\epsilon>0$, there is $\delta>0$ such that every $\delta$-ergodic pseudo orbit is $\epsilon$-ergodic shadowed with index $\alpha$, by some point in $X$. So, $\overline{d}$-shadowing \cite{DH} is a special case for $\alpha=\frac{1}{2}$. $f$ is said to have $\underline{d}$-shadowing \cite{DH} if for every $\epsilon>0$, there is $\delta>0$ such that for every $\delta$-ergodic pseudo orbit $\xi=\lbrace x_i\rbrace_{i\in\mathbb{N}}$ there exists $x\in X$ such that $\underline{d}(B(x,\xi,\epsilon))>0$.        
\medskip

(iii) $ \xi$ is said to be a $\delta$-average pseudo orbit if there is $N\in\mathbb{N}$ such that for each $n\geq N$ and any non-negative integer $k$, lim sup$_{n\to\infty} \frac{1}{n}\Sigma_{i=0}^{n-1}d(f(x_{i+k}),x_{i+k+1})<\delta$. $\xi$ is said to be an asymptotic average pseudo orbit if lim$_{n\to\infty}\frac{1}{n}\Sigma_{i=0}^{n-1}d(f(x_i),x_{i+1})=0$. $\xi$ is said to be a $\delta$-asymptotic average pseudo orbit if lim sup$_{n\to \infty} \Sigma_{i=0}^{n-1} d(f(x_i),x_{i+1})<\delta$. $\xi$ is said to be $\epsilon$-shadowed in average by some $x\in X$ if lim sup$_{n\to\infty}\frac{1}{n}\Sigma_{i=0}^{n-1}d(f^i(x),x_i)<\epsilon$. $\xi$ is said to be asymptotically shadowed in average by some $x\in X$ if lim$_{n\to\infty}\frac{1}{n}\Sigma_{i=0}^{n-1}d(f^i(x),x_i)=0$. $f$ is said to have average shadowing if for every $\epsilon>0$, there is $\delta>0$ such that every $\delta$-average pseudo orbit is $\epsilon$-shadowed in average by some point in $X$. $f$ is said to have asymptotic average shadowing if every asymptotic average pseudo orbit is asymptotically shadowed in average by some point in $X$. $f$ is said to have weak asymptotic average shadowing if for every $\epsilon>0$, any asymptotic average pseudo orbit is $\epsilon$-shadowed in average by some point in $X$.    
\medskip

We define $N(U,V)=\lbrace n\in\mathbb{N}^+\mid f^n(U)\cap V \neq\phi\rbrace$ for two non-empty open sets $U,V\subset X$ and $N(x,U)=\lbrace n\in\mathbb{N}^+ \mid f^n(x)\in U\rbrace$ for non-empty open set $U\subset X$ containing $x$. $f$ is called transitive if $N(U,V)\neq\phi$ and totally transitive if $f^k$ is transitive for each $k\in \mathbb{N}^+$. A point $x\in X$ is called syndetically recurrent if $N(x,U)$ is syndetic for every neighborhood $U$ of $x$. If $M(g)$ denote the set of all syndetically recurrent points of $g$, then note that $M(f)=M(f^k)$ for each $k\in \mathbb{N}^+$. A closed set $L$ is minimal for $f$ if $L=\overline{O_f(x)}$ for some $x\in M(f)$. If the phase space $X$ is minimal, then we say that $f$ is minimal. If $f$ is minimal, then all points of the phase space are syndetically recurrent. But all points syndetically recurrent does not imply that the system is minimal. A finite $\delta$-chain is a finite set of points $\lbrace x_0,x_1,x_2,x_3,...,x_n\rbrace$ such that $d(f(x_i),x_{i+1})<\delta$ for all $0\leq i\leq (n-1)$. We say that $f$ is chain transitive if for any $\epsilon>0$ and any pair of points $x,y\in X$ there is a finite $\epsilon$-chain from $x$ to $y$ (i.e. $x_0=x$ and $x_n=y$) and that $f$ is totally chain transitive if $f^k$ is chain transitive for each $k\in \mathbb{N}^+$.      

\section{Mean Ergodic Shadowing and Other Variants}       

In this section, we introduce a new variant of shadowing called mean ergodic shadowing and study its properties and its relationships with other variants of shadowing.     

\begin{Definition}
Let $f:X\rightarrow X$ be a continuous map on a compact metric space $X$. Then $f$ is said to have mean ergodic shadowing if for every $\epsilon>0$, there is $\delta>0$ such that every $\delta$-ergodic pseudo orbit can be $\epsilon$-shadowed in average by some point in $X$. 
\end{Definition}  

\begin{Lemma} 
Let $f:X\rightarrow X$ be a continuous map on a compact metric space $X$. Then, $f$ has mean ergodic shadowing if and only if for every $\epsilon>0$, there is $\delta>0$ such that every $\delta$-ergodic pseudo orbit is $\epsilon$-shadowed except on a set of upper density $\epsilon$.       
\label{2.1.1} 
\end{Lemma} 

\begin{proof}
Suppose that $f$ has mean ergodic shadowing. For every $\epsilon>0$ there is $\delta>0$ such that if $\xi=\lbrace x_i\rbrace_{i\in\mathbb{N}}$ is a $\delta$-ergodic pseudo orbit, then there is $x\in X$ such that 

lim sup$_{n\to\infty}\frac{1}{n}\Sigma_{i=0}^{n-1}d(f^i(x),x_i)<\epsilon^2$. 

If $E=\lbrace i\in\mathbb{N}\mid d(f^i(x),x_i)\geq \epsilon\rbrace$, then 

$\epsilon^2>$lim sup$_{n\to\infty}\frac{1}{n}\Sigma_{i=0}^{n-1}d(f^i(x),x_i)\geq$ lim sup$_{n\to\infty}\frac{1}{n}(\epsilon\#([0,n-1]\cap E))=\epsilon\overline{d}(E)$. 

This implies that $\overline{d}(E)<\epsilon$.    
\medskip
 
Conversely, fix $\epsilon>0$ and choose $\eta<\frac{\epsilon}{diam(X)+1}$. Let $\delta>0$ be such that if $\xi=\lbrace x_i\rbrace_{i\in\mathbb{N}}$ is a $\delta$-ergodic pseudo orbit, then $\xi$ is $\eta$-shadowed except on a set of upper density less than $\epsilon$, by some point $x$ in $X$. Thus, if $E=\lbrace i\in\mathbb{N}\mid d(f^i(x),x_i)\geq \eta\rbrace$, then $\overline{d}(E)<\eta$ and 

lim sup$_{n\to\infty}\frac{1}{n}\Sigma_{i=0}^{n-1} d(f^i(x),x_i)$

$\leq $lim sup$_{n\to\infty}\frac{1}{n}(diam(X)\#([0,n-1]\cap E)+\eta n)$

$\leq diam(X)\overline{d}(E)+\eta<\epsilon$.

This implies that $f$ has mean ergodic shadowing.
\end{proof} 

In view of the above lemma, the following definition of mean ergodic shadowing is useful in our study.   

\begin{Definition}
Let $f:X\rightarrow X$ be a continuous map on a compact metric space $X$. Then $f$ has mean ergodic shadowing if for every $\epsilon>0$, there is $\delta>0$ such that if $\xi=\lbrace x_i\rbrace_{i\in\mathbb{N}}$ is a $\delta$-ergodic pseudo orbit, there is $x\in X$ such that $\overline{d}(B^c(x,\xi,\epsilon))<\epsilon$, where $B^c(x,\xi,\epsilon)=\lbrace i\in \mathbb{N}\mid d(f^i(x),x_i)\geq\epsilon\rbrace$.    
\label{2.1.3}  
\end{Definition}

It follows from Definition \ref{2.1.3} that ergodic shadowing implies mean ergodic shadowing. Further, it is immediate from the identity $\underline{d}(A)=1-\overline{d}(A^c)$ that mean ergodic shadowing implies $\underline{d}$-shadowing.      

\begin{theorem} 
Let $f$ and $g$ be two continuous maps on $(X,d_1)$ and $(Y,d_2)$, respectively. If $h:X\rightarrow Y$ is a homeomorphism, then $g=h\circ f\circ h^{-1}$ has mean ergodic shadowing if and only if $f$ has mean ergodic shadowing. 
\end{theorem}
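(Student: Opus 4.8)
The plan is to exploit the symmetry of the situation together with the uniform continuity of $h$ and $h^{-1}$, which we obtain for free because $X$ and $Y$ are compact metric spaces. Since $f=h^{-1}\circ g\circ h$, the two implications are completely symmetric, so it suffices to prove that if $f$ has mean ergodic shadowing then so does $g$; the reverse implication follows by interchanging the roles of $f$ and $g$ and replacing $h$ by $h^{-1}$. Throughout I would use Definition \ref{2.1.3} as the working definition of mean ergodic shadowing, together with the conjugacy identity $g^{i}=h\circ f^{i}\circ h^{-1}$ for all $i\in\mathbb{N}$, which is immediate by induction.

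Now fix $\epsilon>0$. The first step is to choose the error tolerance that I will feed into the shadowing of $f$. By uniform continuity of $h$, I would pick $\epsilon_1$ with $0<\epsilon_1\leq\epsilon$ such that $d_1(p,q)<\epsilon_1$ implies $d_2(h(p),h(q))<\epsilon$. Applying the mean ergodic shadowing of $f$ to this $\epsilon_1$ yields a $\delta_1>0$. Finally, by uniform continuity of $h^{-1}$, I would pick $\delta>0$ such that $d_2(a,b)<\delta$ implies $d_1(h^{-1}(a),h^{-1}(b))<\delta_1$. This $\delta$ is the number required for $g$ at the level $\epsilon$.

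The second step is the transfer. Given a $\delta$-ergodic pseudo orbit $\lbrace y_i\rbrace_{i\in\mathbb{N}}$ of $g$, set $x_i=h^{-1}(y_i)$. For every index $i$ with $d_2(g(y_i),y_{i+1})<\delta$, writing $g(y_i)=h(f(x_i))$ and $y_{i+1}=h(x_{i+1})$ and applying $h^{-1}$ gives $d_1(f(x_i),x_{i+1})<\delta_1$; hence the set $\lbrace i\mid d_1(f(x_i),x_{i+1})\geq\delta_1\rbrace$ is contained in the density-zero set $\lbrace i\mid d_2(g(y_i),y_{i+1})\geq\delta\rbrace$ and therefore has density zero. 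Thus $\lbrace x_i\rbrace$ is a $\delta_1$-ergodic pseudo orbit of $f$. By the shadowing of $f$ there is $x\in X$ with $d_1(f^i(x),x_i)<\epsilon_1$ off an exceptional set $E$ with $\overline{d}(E)<\epsilon_1$. Putting $y=h(x)$, for every $i\notin E$ the choice of $\epsilon_1$ gives $d_2\big(h(f^i(x)),h(x_i)\big)<\epsilon$, while the conjugacy identity gives $h(f^i(x))=g^i(y)$ and $h(x_i)=y_i$; hence $d_2(g^i(y),y_i)<\epsilon$ for all $i\notin E$, where $\overline{d}(E)<\epsilon_1\leq\epsilon$. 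This is exactly Definition \ref{2.1.3} for $g$.

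I do not expect any genuine obstacle here: the argument is essentially uniform-continuity bookkeeping. The only point that needs care is to arrange $\epsilon_1\leq\epsilon$ at the outset, so that the exceptional set $E$, whose upper density is controlled by $\epsilon_1$, automatically has upper density below the target $\epsilon$; working instead with the averaged formulation of Definition \ref{2.1.1} would force one to additionally track the diameter of $Y$, which the density formulation neatly avoids. With the one direction established, symmetry closes the proof.
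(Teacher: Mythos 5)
Your proposal is correct and follows essentially the same route as the paper: pull the $\delta$-ergodic pseudo orbit of $g$ back through $h^{-1}$ to get an ergodic pseudo orbit of $f$, shadow it using Definition \ref{2.1.3}, push the shadowing point forward through $h$ via the conjugacy $g^i = h\circ f^i\circ h^{-1}$, and invoke symmetry for the converse. If anything, your bookkeeping is tidier than the paper's: you correctly use uniform continuity of $h$ to choose the tolerance fed to $f$'s shadowing and uniform continuity of $h^{-1}$ to choose $\delta$, whereas the paper's write-up swaps these roles and its displayed chain of inclusions has the containments in questionable directions, though the intended argument is the same.
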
 

\begin{proof}
Suppose that $f$ has mean ergodic shadowing. Let $\epsilon>0$ and $0<\epsilon'<\epsilon$ be given for $\epsilon$ by uniform continuity of $h^{-1}$, i.e. $d_1(x,y)<\epsilon'$ implies $d_2(h(x),h(y))<\epsilon$. Further, let $\delta'>0$ be such that every $\delta'$-ergodic pseudo orbit of $f$ is $\epsilon'$-shadowed except on a set of upper density less than $\epsilon'$ and let $\delta>0$ be given for $\delta'$ by uniform continuity of $h^{-1}$, i.e. $d_2(x,y)<\delta$ implies $d_1(h^{-1}(x),h^{-1}(y))<\delta'$.   
\medskip

Let $\xi=\lbrace x_i\rbrace_{i\in\mathbb{N}}$ be a $\delta$-ergodic pseudo orbit for $g=h\circ f\circ h^{-1}$, i.e. $d(E)=0$, where 

$E=\lbrace i\in\mathbb{N}\mid d_2((h\circ f\circ h^{-1})(x_i),x_{i+1})\geq\delta\rbrace\supset \lbrace i\in\mathbb{N}\mid d_1(f(h^{-1}(x_i)),h^{-1}(x_{i+1}))\geq\delta'\rbrace$.
\medskip

This shows that $\lbrace h^{-1}(x_i)\rbrace_{i\in\mathbb{N}}$ is a $\delta'$-ergodic pseudo orbit for $f$. So by the mean ergodic shadowing of $f$, there is $x\in X$ such that $\overline{d}(E')<\epsilon'$, where 

$E'=\lbrace i\in\mathbb{N}\mid d_1(f^i(x),h^{-1}(x_i))\geq\epsilon'\rbrace$

$\supset\lbrace i\in\mathbb{N}\mid d_2(h(f^i(x)),x_i)\geq\epsilon\rbrace$ 

$=\lbrace i\in\mathbb{N}\mid d_2((h\circ f^i\circ h^{-1})(h(x)),x_i)\geq\epsilon\rbrace$

$=\lbrace i\in\mathbb{N}\mid d_2(g^i(h(x)),x_i)\geq\epsilon\rbrace$.
\medskip

If $E''=\lbrace i\in\mathbb{N}\mid d_2(g^i(h(x)),x_i)\geq\epsilon\rbrace$, we have $\overline{d}(E'')<\epsilon$ because $\overline{d}(E')<\epsilon'<\epsilon$. Thus, $g$ has mean ergodic shadowing. The converse holds because $h$ is a homeomorphism.    
\end{proof} 

\begin{theorem}
Let $f$ and $g$ be two continuous maps on $(X,d_1)$ and $(Y,d_2)$, respectively. If $f$ and $g$ has mean ergodic shadowing, then $f\times g:X\times Y\rightarrow X\times Y$ has mean ergodic shadowing.    
\end{theorem}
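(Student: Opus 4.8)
The plan is to verify Definition \ref{2.1.3} directly for the product map, exploiting the fact that a $\delta$-ergodic pseudo orbit of $f\times g$ splits into coordinate sequences that are ergodic pseudo orbits of $f$ and of $g$. I would equip $X\times Y$ with the metric $D((a,b),(a',b'))=\max\{d_1(a,a'),d_2(b,b')\}$, so that $(f\times g)^i(x,y)=(f^i(x),g^i(y))$ and $D$ simultaneously controls both coordinates. Given $\epsilon>0$, the first move is to apply the mean ergodic shadowing of $f$ and of $g$ each with tolerance $\epsilon/2$, obtaining $\delta_1,\delta_2>0$, and to set $\delta=\min\{\delta_1,\delta_2\}$.

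Let $\{(x_i,y_i)\}_{i\in\mathbb{N}}$ be a $\delta$-ergodic pseudo orbit for $f\times g$, and put $E=\lbrace i\in\mathbb{N}\mid D((f\times g)(x_i,y_i),(x_{i+1},y_{i+1}))\geq\delta\rbrace$, so that $d(E)=0$. Since $D$ is the maximum of the two coordinate distances and $\delta\le\delta_1,\delta_2$, both $\lbrace i\mid d_1(f(x_i),x_{i+1})\geq\delta_1\rbrace$ and $\lbrace i\mid d_2(g(y_i),y_{i+1})\geq\delta_2\rbrace$ are contained in $E$; being subsets of a density-zero set, they have density zero. Hence $\{x_i\}_{i\in\mathbb{N}}$ is a $\delta_1$-ergodic pseudo orbit for $f$ and $\{y_i\}_{i\in\mathbb{N}}$ is a $\delta_2$-ergodic pseudo orbit for $g$.

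Next, by the mean ergodic shadowing of $f$ and $g$ I would produce $x\in X$ and $y\in Y$ for which the exceptional sets $E_1=\lbrace i\mid d_1(f^i(x),x_i)\geq\epsilon/2\rbrace$ and $E_2=\lbrace i\mid d_2(g^i(y),y_i)\geq\epsilon/2\rbrace$ satisfy $\overline{d}(E_1)<\epsilon/2$ and $\overline{d}(E_2)<\epsilon/2$. Considering the point $(x,y)\in X\times Y$, for every $i\notin E_1\cup E_2$ one has $D((f\times g)^i(x,y),(x_i,y_i))=\max\{d_1(f^i(x),x_i),d_2(g^i(y),y_i)\}<\epsilon/2<\epsilon$, so the exceptional set for $(x,y)$ is contained in $E_1\cup E_2$.

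The conclusion then follows from the subadditivity of upper density, $\overline{d}(E_1\cup E_2)\leq\overline{d}(E_1)+\overline{d}(E_2)<\epsilon$, which shows that $(x,y)$ shadows $\{(x_i,y_i)\}$ in the sense of Definition \ref{2.1.3}, whence $f\times g$ has mean ergodic shadowing. The only genuine obstacle is this last bookkeeping: the exceptional set of the product is the \emph{union} of the two coordinate exceptional sets, whose upper densities add rather than merely take a maximum, which is precisely why I split the tolerance as $\epsilon=\epsilon/2+\epsilon/2$ at the outset. (If one prefers the sum metric on $X\times Y$, the same argument goes through after replacing $\epsilon/2$ by a suitably smaller tolerance.)
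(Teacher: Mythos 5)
Your proof is correct and follows essentially the same route as the paper: the max metric on $X\times Y$, tolerances split as $\epsilon/2$ for each factor, $\delta=\min\{\delta_1,\delta_2\}$, projection of the product pseudo orbit onto coordinate ergodic pseudo orbits, and shadowing by the product point $(x,y)$. The only difference is that you spell out the final step (exceptional set contained in $E_1\cup E_2$, then subadditivity of upper density) which the paper dismisses with ``Clearly''.
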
 

\begin{proof}
Here, $(X\times Y, d)$ is a compact metric space, where $d:(X\times Y)\times (X\times Y) \rightarrow \mathbb{R}$ be given by $d(x_1\times y_1, x_2\times y_2)=$ max $\lbrace d_1(x_1,x_2),d_2(y_1,y_2)\rbrace$. Let us fix $\epsilon>0$. Let $\delta_1$ and $\delta_2>0$ be given for $\frac{\epsilon}{2}$ by mean ergodic shadowing of $f$ and $g$, respectively. Let $\delta=$ min $\lbrace \delta_1,\delta_2\rbrace$ and $\lbrace (x_i\times y_i)_{i\in\mathbb{N}}\rbrace$ be a $\delta$-ergodic pseudo orbit for $f\times g$. Then $\lbrace x_i\rbrace_{i\in\mathbb{N}}$ and $\lbrace y_i\rbrace_{i\in\mathbb{N}}$ are $\delta_1$ and $\delta_2$ ergodic pseudo orbit for $f$ and $g$, respectively. So, there is $x\in X$ and $y\in Y$ such that $\overline{d}(A)<\frac{\epsilon}{2}$ and $\overline{d}(B)<\frac{\epsilon}{2}$, where $A=\lbrace i\in\mathbb{N}\mid d_1(f^i(x),x_i)\geq\frac{\epsilon}{2}\rbrace$ and $B=\lbrace i\in\mathbb{N}\mid d_2(g^i(y),y_i)\geq\frac{\epsilon}{2}\rbrace$. Then it is clear that $\overline{d}(\lbrace i\in\mathbb{N}\mid d(f^i(x\times y),(x_i\times y_i))\geq \epsilon\rbrace)<\epsilon$. This completes our proof.                      
\end{proof} 

\begin{theorem}
If $f$ has mean ergodic shadowing then for each $k\in\mathbb{N}^+$, $f^k$ has mean ergodic shadowing.  
\label{Pramod5} 
\end{theorem}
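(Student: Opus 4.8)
The plan is to deduce mean ergodic shadowing of $f^k$ from that of $f$ by \emph{interpolating} a given ergodic pseudo orbit for $f^k$ into one for $f$, shadowing the latter, and then sampling the shadowing orbit at every $k$-th time. Given $\epsilon>0$, I would first apply the mean ergodic shadowing of $f$ to the smaller constant $\epsilon_0=\epsilon/k$ (note $\epsilon_0<\epsilon$ since $k>1$), obtaining $\delta>0$, and claim this same $\delta$ works for $f^k$. So let $\{y_j\}_{j\in\mathbb{N}}$ be a $\delta$-ergodic pseudo orbit for $f^k$, i.e. $G=\{j\in\mathbb{N}\mid d(f^k(y_j),y_{j+1})\geq\delta\}$ has density zero, and define an $f$-sequence $\{x_i\}_{i\in\mathbb{N}}$ block by block: for $i=kj+r$ with $0\leq r\leq k-1$, put $x_i=f^r(y_j)$.

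Next I would check that $\{x_i\}$ is a $\delta$-ergodic pseudo orbit for $f$. Inside each block the error $d(f(x_i),x_{i+1})$ vanishes exactly because $x_{i+1}=f(x_i)$, while at a junction $i=kj+(k-1)$ one has $d(f(x_i),x_{i+1})=d(f^k(y_j),y_{j+1})$. Hence the bad set $\{i\mid d(f(x_i),x_{i+1})\geq\delta\}$ equals $\{kj+k-1\mid j\in G\}$, whose counting function on $[0,n-1]$ is bounded by $\#(G\cap[0,n-1])$; since $G$ has density zero, so does this set, and $\{x_i\}$ is indeed $\delta$-ergodic for $f$. Mean ergodic shadowing of $f$ then yields $x\in X$ with $d(f^i(x),x_i)<\epsilon_0$ off a set $E$ satisfying $\overline{d}(E)<\epsilon_0=\epsilon/k$. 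Because $f^{kj}(x)=(f^k)^j(x)$ and $x_{kj}=y_j$, the point $x$ obeys $d((f^k)^j(x),y_j)<\epsilon_0\leq\epsilon$ for every $j$ with $kj\notin E$.

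The step I expect to be the crux is the density comparison needed to control the exceptional set $F=\{j\in\mathbb{N}\mid kj\in E\}$ for $f^k$. Here $\#(F\cap[0,n-1])=\#(E\cap\{0,k,\dots,(n-1)k\})\leq\#(E\cap[0,(n-1)k])$, so dividing by $n$ and passing to the $\limsup$ gives $\overline{d}(F)\leq k\,\overline{d}(E)$. This factor $k$ is sharp, as $E=k\mathbb{N}$ shows, and it is precisely what forces the initial choice $\epsilon_0=\epsilon/k$ rather than $\epsilon_0=\epsilon$. Combining the two estimates yields $\overline{d}(F)\leq k\,\overline{d}(E)<\epsilon$, so $x$ witnesses mean ergodic shadowing of $f^k$ at scale $\epsilon$ (in the sense of Definition \ref{2.1.3}), completing the argument for every $k>1$.
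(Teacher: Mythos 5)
Your proposal is correct and takes essentially the same approach as the paper: both interpolate the given $f^k$-ergodic pseudo orbit into an $f$-ergodic pseudo orbit by inserting the intermediate iterates $f(y_j),\dots,f^{k-1}(y_j)$, apply mean ergodic shadowing of $f$ at the scale $\epsilon/k$, and then sample the shadowing orbit at multiples of $k$, with the choice $\epsilon/k$ absorbing the factor of $k$ lost in the sampling step. The only difference is bookkeeping: you run the factor-$k$ estimate on upper densities of exceptional sets (the density formulation of Definition \ref{2.1.3}, via $\overline{d}(F)\leq k\,\overline{d}(E)$), whereas the paper runs the same estimate directly on the averaged sums, comparing $\frac{1}{n}\Sigma_{i=0}^{n-1}d(f^{ik}(y),y_{ik})$ with $k$ times $\frac{1}{nk}\Sigma_{l=0}^{nk-1}d(f^l(y),y_l)$.
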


\begin{proof}
Let $\epsilon>0$ and $k>1$ be a fixed integer. Let $\delta>0$ be given for $\frac{\epsilon}{k}$ by mean ergodic shadowing of $f$. Let $\lbrace x_0,x_1,...,x_{m_0};x_{m_0+1},x_{m_0+2},...,x_{m_1};x_{m_1+1},...\rbrace$ be a $\delta$-ergodic pseudo orbit for $f^k$, in which $d(f(x_{m_i}),x_{m_i+1})\geq \delta$ for all $i\in\mathbb{N}$ and $d(f(x),y))<\delta$ for any other pair of consecutive points $x$ and $y$ in the sequence. By locating the finite sequence $\lbrace f(x_i),f^2(x_i),...,f^{k-1}(x_i)\rbrace$ between $x_i$ and $x_{i+1}$ for all $i\in\mathbb{N}$, we obtain the $\delta$-ergodic pseudo orbit $\lbrace y_i\rbrace_{i\in\mathbb{N}}=\lbrace x_0,f(x_0),f^2(x_0),...,f^{k-1}(x_0),x_1,f(x_1),f^2(x_1),...,f^{k-1}(x_1),x_2,...\rbrace$ for $f$. 
\medskip

By mean ergodic shadowing of $f$, there is $y\in X$ such that 

lim sup$_{n\to\infty}\frac{1}{n}\Sigma_{i=0}^{n-1}d(f^i(y),y_i)<\frac{\epsilon}{k}$. 

Therefore, 

lim sup$_{n\to\infty}\frac{1}{n}\Sigma_{i=0}^{n-1}d((f^k)^i(y),x_i)$

=lim sup$_{n\to\infty}\frac{1}{n}\Sigma_{i=0}^{n-1}d(f^{ik}(y),y_{ik})$

$\leq$lim sup$_{n\to\infty}\frac{1}{n}\Sigma_{i=0}^{n-1}\Sigma_{j=0}^{k-1}d(f^{ki+j}(y),y_{ki+j})$

$\leq k\times$lim sup$_{n\to\infty}\frac{1}{nk}\Sigma_{l=0}^{nk-1}d(f^l(y),y_l)<\epsilon$. 
\medskip

Thus, we conclude that $f^k$ has mean ergodic shadowing for each $k\in\mathbb{N}^+$.    
\end{proof}    

\begin{Example}    
Let $f:[0,1]\rightarrow [0,1]$ be given by $f(x)=1-x$. Since $f$ is an isometry, it cannot have shadowing. We prove that $f$ cannot have mean ergodic shadowing either. Let $\epsilon<\frac{1}{3}$ and set   
\begin{center}
$a_0=\lbrace 0,1\rbrace$, $a_1=\lbrace 0,1,1,0\rbrace$, $a_2=\lbrace 0,1,0,1,1,0,1,0\rbrace$,..., $a_n=\lbrace \underbrace{0,1,0,1,...,0,1}_\text{(0,1) repeated $n$-times},\underbrace{1,0,...,1,0,1,0}_\text{(1,0) repeated $n$-times}\rbrace$,... 
\end{center}
\medskip

Note that $\lbrace a_0\vee a_1\vee a_2\vee...\vee a_n\vee...\rbrace$ is a $\delta$-ergodic pseudo orbit for any $\delta>0$, where $\vee$ denotes the concatenation operation, for example, $a_0\vee a_1=\lbrace 0,1,0,1,1,0\rbrace$. But it cannot be $\epsilon$-shadowed except on a set of upper density less than $\epsilon$ due to the following reason.   
\medskip

Observe that a tracing point $x$ must come from the $\epsilon$-neighborhood of either zero or one. If it comes from a neighborhood of zero, then the orbit cannot trace the $n$-times repeated $\lbrace 1,0\rbrace$ in the second half of each of $a_n$. If it comes from a neighborhood of one, then the orbit cannot trace the $n$-times repeated $\lbrace 0,1\rbrace$ in the first half of each of $a_n$.  
\end{Example}

\begin{Proposition}
If $f$ has mean ergodic shadowing, then it has $M^{\alpha}$-shadowing for each $\alpha\in (0,1)$.    
\label{2.2.4}
\end{Proposition}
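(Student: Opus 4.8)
The plan is to recognize that $\underline{d}$-shadowing is precisely the complementary density statement to the conclusion of mean ergodic shadowing, so the whole argument reduces to a short deduction from Definition \ref{2.1.3} together with the elementary fact relating the lower density of a set to the upper density of its complement. The point is that the ``good set'' $B(x,\xi,\epsilon)$ appearing in $\underline{d}$-shadowing is exactly the complement of the ``bad set'' $E$ controlled by mean ergodic shadowing.

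First I would record the density identity: for any $A\subset\mathbb{N}$ one has $\underline{d}(A)=1-\overline{d}(\mathbb{N}\setminus A)$. This is immediate from $\#(A\cap[0,n-1])=n-\#((\mathbb{N}\setminus A)\cap[0,n-1])$ upon dividing by $n$ and passing to $\liminf$ and $\limsup$, using that $\liminf_n(1-a_n)=1-\limsup_n a_n$.

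Next, given $\epsilon>0$, I would invoke mean ergodic shadowing not at scale $\epsilon$ but at scale $\epsilon_0=\min\{\epsilon,1\}$, producing $\delta>0$ with the property that every $\delta$-ergodic pseudo orbit $\xi=\{x_i\}_{i\in\mathbb{N}}$ admits a point $x\in X$ for which the bad set $E=\{i\in\mathbb{N}\mid d(f^i(x),x_i)\geq\epsilon_0\}$ satisfies $\overline{d}(E)<\epsilon_0$. Since $B(x,\xi,\epsilon_0)=\mathbb{N}\setminus E$, the identity above gives $\underline{d}(B(x,\xi,\epsilon_0))=1-\overline{d}(E)>1-\epsilon_0\geq 0$, and because the inequality $\overline{d}(E)<\epsilon_0\leq 1$ is strict this lower density is strictly positive. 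Finally, as $\epsilon_0\leq\epsilon$ we have the inclusion $B(x,\xi,\epsilon_0)\subseteq B(x,\xi,\epsilon)$, so $\underline{d}(B(x,\xi,\epsilon))\geq\underline{d}(B(x,\xi,\epsilon_0))>0$, and this same $\delta$ witnesses $\underline{d}$-shadowing at scale $\epsilon$.

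I do not expect any genuine obstacle in this argument, as it is a direct translation between an upper-density bound and a lower-density bound. The only step requiring mild care is the choice $\epsilon_0=\min\{\epsilon,1\}$: applying mean ergodic shadowing naively with $\epsilon$ itself would leave $1-\overline{d}(E)$ possibly nonpositive when $\epsilon\geq 1$, whereas truncating at $1$ forces $\overline{d}(E)<1$ and hence the strict positivity of $\underline{d}(B(x,\xi,\epsilon))$ that the definition demands.
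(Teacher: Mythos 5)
Your proof is correct and is essentially the paper's own argument: both pass from the upper-density bound on the bad set $E$ to a positive lower density of its complement $B(x,\xi,\epsilon)=\mathbb{N}\setminus E$ via the identity $\underline{d}(\mathbb{N}\setminus E)=1-\overline{d}(E)$. Your truncation $\epsilon_0=\min\{\epsilon,1\}$ together with the monotonicity $B(x,\xi,\epsilon_0)\subseteq B(x,\xi,\epsilon)$ is just an explicit rendering of the paper's ``without loss of generality we can choose $\epsilon<1$.''
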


\begin{proof} 
Fix $\alpha\in (0,1)$ and choose $\epsilon_0>0$ such that $\alpha<1-\epsilon_0$. For this $\epsilon_0$ there exists $\delta_0>0$ by mean ergodic shadowing of $f$, such that every $\delta_0$-ergodic pseudo orbit $\xi=\lbrace x_i\rbrace_{i\in\mathbb{N}}$ satisfies $\overline{d}(B^c(x,\xi,\epsilon_0))<\epsilon_0$ and hence, satisfies $\overline{d}(B(x,\xi,\epsilon_0))>1-\epsilon_0>\alpha$ for some $x\in X$. If $\epsilon>\epsilon_0$, then choose the same $\delta_0$ associated with $\epsilon_0$ so that every $\delta_0$-ergodic pseudo orbit $\xi=\lbrace x_i\rbrace_{i\in\mathbb{N}}$ satisfies $\overline{d}(B(x,\xi,\epsilon))\geq\overline{d}(B(x,\xi,\epsilon_0))>1-\epsilon_0>\alpha$. If $\epsilon<\epsilon_0$, then by mean ergodic shadowing there exists $\delta>0$ such that every $\delta$-ergodic pseudo orbit $\xi=\lbrace x_i\rbrace_{i\in\mathbb{N}}$ satisfies $\overline{d}(B(x,\xi,\epsilon))>1-\epsilon>1-\epsilon_0>\alpha$. Since $\alpha$ was chosen arbitrarily, we are through.         
\end{proof} 

\begin{Proposition}
If $f$ has weak asymptotic average shadowing, then it has mean ergodic shadowing.    
\label{2.2.6}  
\end{Proposition}

\begin{proof} 
In view of Theorem 4.3 \cite{WOC}, it is sufficient to show that a $\delta$-ergodic pseudo orbit $\lbrace x_i\rbrace_{i\in\mathbb{N}}$ is a $\delta$-asymptotic average pseudo orbit. 
\medskip

Indeed if $E=\lbrace i\in\mathbb{N}\mid d(f(x_i),x_{i+1})\geq \delta\rbrace$, then $d(E)=0$ and hence, $\overline{d}(E)<\eta$ where $\eta<\frac{\delta}{diam(X)+1}$. Therefore, 
\medskip

lim sup$_{n\to\infty}\frac{1}{n}\Sigma_{i=0}^{n-1}d(f(x_i),x_{i+1})$

$\leq$ lim sup$_{n\to\infty}\frac{1}{n}(diam(X)\#([0,n-1]\cap E)+\eta n)$

$\leq diam(X)\overline{d}(E)+\eta<\delta$. 
\end{proof} 

The following result holds because it is easy to observe that asymptotic average shadowing implies weak asymptotic average shadowing. 

\begin{Corollary}
If $f$ has asymptotic average shadowing, it has mean ergodic shadowing. 
\end{Corollary}

\begin{Lemma}
Let $f$ be chain mixing. If for given $\epsilon>0$, there is $\delta>0$ such that every $\delta$-pseudo orbit is $\epsilon$-shadowed except on a set of upper density less than $\epsilon$, then any $\delta$-ergodic pseudo orbit is $\epsilon$-shadowed except on a set of upper density less than $\epsilon$.    
\label{Pramod1}   
\end{Lemma}

\begin{proof}
The proof follows similarly as in Lemma 4.1 \cite{DH}.  
\end{proof}

\begin{Proposition}
If $f$ has average shadowing, then it has mean ergodic shadowing.
\label{Pramod6} 
\end{Proposition}

\begin{proof}
Since average shadowing implies chain mixing, by Lemma \ref{Pramod1} it is enough to show that for given $\epsilon>0$, there is $\delta>0$ such that every $\delta$-pseudo orbit is $\epsilon$-shadowed except on a set of upper density less than $\epsilon$, by some point in $X$.  
\medskip

Fix $\epsilon>0$ and let $\delta>0$ be given for $\epsilon^2$ by average shadowing of $f$. If $\lbrace x_i\rbrace_{i\in\mathbb{N}}$ is a $\delta$-pseudo orbit, then it is a $\delta$-average pseudo orbit. So, lim sup$_{n\to\infty} \Sigma_{i=0}^{n-1} d(f^i(x),x_i)<\epsilon^2$ for some $x\in X$. 
\medskip

If $E=\lbrace i\in\mathbb{N}\mid d(f^i(x),x_i)\geq \epsilon\rbrace$, then 

$\epsilon^2>$lim sup$_{n\to\infty}\frac{1}{n}\Sigma_{i=0}^{n-1}d(f^i(x),x_i)\geq$ lim sup$_{n\to\infty}\frac{1}{n}(\epsilon\#([0,n-1]\cap E))=\epsilon\overline{d}(E)$. 

Thus $\overline{d}(E)<\epsilon$ and this completes our proof.           
\end{proof} 

The following result holds because it is proved in \cite{GD} that almost average shadowing implies average shadowing.  

\begin{Corollary}
If $f$ has almost average shadowing, then it has mean ergodic shadowing. 
\end{Corollary}

\begin{theorem}
Let $f:X\rightarrow X$ be a continuous map on a compact metric space $X$. If $f$ has shadowing, then the following are equivalent:     
\begin{itemize}
\item [(i)] $f$ has mean ergodic shadowing,  
\item [(ii)] $f$ has average shadowing, 
\item [(iii)] $f$ is totally transitive,  
\end{itemize}
\label{3.13} 
\end{theorem}

\begin{proof}
It follows from Proposition \ref{2.2.4} and Theorem 2.2 \cite{DH} and Theorem \ref{Pramod5} that if $f$ has mean ergodic shadowing, then it is totally chain transitive. Since $f$ has shadowing, $f^k$ has shadowing \cite{AH}. Therefore, to show (i)$\Rightarrow$ (iii) it is enough to show that chain transitivity implies transitivity. Suppose that $f$ is chain transitive. Let $U,V\subset X$ be a pair of non-empty open sets and $x\in U$, $y\in V$. Let $\epsilon>0$ be such that $B(x,\epsilon)\subset U$ and $B(y,\epsilon)\subset V$. Let $\delta>0$ be given for $\epsilon$ by shadowing of $f$. Since $f$ is chain transitive, for any sufficiently large $n>0$ there is a finite set $\lbrace x=x_0,x_1,...,x_n=y\rbrace$ which can be extended to a $\delta$-pseudo orbit $\lbrace x=x_0,x_1,...,x_n=y,f(y),f^2(y)...\rbrace$. By shadowing of $f$ there is $w\in X$ such that $d(w,x)<\epsilon$ and $d(f^n(w),y)<\epsilon$, i.e. $w\in B(x,\epsilon)\subset U$ and $f^n(w)\in B(y,\epsilon)\subset V$ which implies $f^n(U)\cap V\neq\phi$. Thus, $f$ is transitive. The implication (iii)$\Rightarrow$ (ii) is a consequence of Theorem 3.4 \cite{GD}. The implication (ii)$\Rightarrow$ (i) follows from Proposition \ref{Pramod6} without the presence of shadowing.        
\end{proof} 

So, mean ergodic shadowing of a continuous map with shadowing is equivalent to one and hence, all the dynamical notions listed in Theorem 3.8 \cite{KKO} and Theorem 3.4 \cite{GD}.    

\begin{Example}
Let $X=C_1\cup C_2$, where $C_1$ and $C_2$ be disjoint circles in euclidean plane and $f$ be the map sending a point $\theta$ on $C_1$ to $2\theta$ on $C_2$ and vice-versa. Then, $f^2$ is not chain transitive and hence, not totally chain transitive. So by Theorem \ref{3.13}, it cannot have mean ergodic shadowing.    
\end{Example} 

\begin{Example} 
The period doubling map $f:S^1\rightarrow S^1$ given by $f(\theta)=2\theta$ has shadowing and it is totally transitive. So, by Theorem \ref{3.13}, it has mean ergodic shadowing. 
\end{Example} 

\begin{Example}
For any compact metric space $X$, the shift map $\sigma:X^{\mathbb{N}}\rightarrow X^{\mathbb{N}}$ is totally transitive and has shadowing. So, by Theorem \ref{3.13}, it has mean ergodic shadowing. 
\end{Example}

Since finite shadowing is equivalent to shadowing on relatively compact metric spaces, therefore Lemma 3.2 \cite{FG} is true for continuous maps on such spaces. But the following example shows that the converse is not true.         

\begin{Example} 
Let $X=\bigcup_{n\in\mathbb{Z}} (\lbrace n\rbrace\times [0,\frac{1}{2^{\mid n\mid}}])\subset\mathbb{R}^2$ be the subspace of $\mathbb{R}^2$. Let $f$ be the homeomorphism on $X$ given by 
\[f(k,x)=\begin{cases}
(k+1,\frac{1}{2}x) & \textnormal {for $k\geq 0$} 
\\
(k+1,2x) & \textnormal {for $k<0$}  
\end{cases}\]      
Every $\delta$-pseudo orbit $\lbrace x_i\rbrace_{i\in\mathbb{N}}$ can be transformed into a $\delta$-pseudo orbit $\lbrace x_i\rbrace_{i\in\mathbb{Z}}$ by taking $x_i=f^{i}(x_0)$ for all $i\leq -1$. Therefore, using Example 7 \cite{DLRW} we conclude that $f$ has shadowing. We now show that $f$ does not have mean ergodic shadowing which in turn, implies that $f$ does not have ergodic shadowing. 
\medskip

Let $\delta>0$ be given and let $M$ be a subset of $\mathbb{N}$ such that $d(M)=0$. Let $x_i\in \lbrace i\rbrace\times [0,\frac{1}{2^{\mid i\mid}}]$ for $i\in \mathbb{N}\setminus M$ and $x_i\in\lbrace 0\rbrace\times [0,1]$ for $i\in M$. One can find such a sequence with additional property that $d(f(x_i),x_{i+1})<\delta$ for all $i\in\mathbb{N}\setminus M$. Since $M$ has density zero, $\lbrace x_i\rbrace_{i\in\mathbb{N}}$ is a $\delta$-ergodic pseudo orbit. This $\delta$-ergodic pseudo orbit cannot be $\epsilon$-ergodic shadowed because for any $x\in X$, $f^i(x)$ increases along $x$-axis as $i$ goes to $\infty$.  
\end{Example} 

Recall from \cite{WOC} that factors onto a minimal equicontinuous system cannot have $\overline{d}$-shadowing and hence, by Proposition \ref{2.2.4} such a system cannot have mean ergodic shadowing. In particular, factor of a dyadic adding machine cannot have mean ergodic shadowing which is also evident from the following result. 

\begin{theorem}
If $f$ has shadowing and mean ergodic shadowing, it cannot be minimal.     
\end{theorem} 

\begin{proof}
If possible, suppose that $f$ has both shadowing and mean ergodic shadowing. Let $x,y\in X$ with $x\neq y$ and $0<\epsilon<\frac{d(x,y)}{2}$. Let $\delta\in (0,\frac{\epsilon}{2})$ be given for $\frac{\epsilon}{2}$ by shadowing of $f$. Since $f$ has shadowing and mean ergodic shadowing, by Proposition \ref{3.13} it is totally transitive. So, there is $k\geq 1$ such that $f^k(B(x,\delta))\cap B(x,\delta)\neq \phi$, i.e. there is $z\in B(x,\delta)$ such that $f^k(z)\in B(x,\delta)$. Then, $\lbrace z,f(z),f^2(z),...,f^{k-1}(z),z,f(z),f^2(z),...f^{k-1}(z),...\rbrace$ is a $\delta$-pseudo orbit for $f$ and hence, there is $w\in X$ such that $d(f^{kn}(w),z)<\frac{\epsilon}{2}$ for all $n\in\mathbb{N}$. By Zorn's lemma there is a syndetically recurrent point $p\in\overline{O_{f^k}(w)}$ for $f^k$. Observe that $d(f^{kn}(p),z)\leq\frac{\epsilon}{2}$ for all $n\in\mathbb{N}$ and therefore, $d(x,f^{kn}(p))\leq d(x,z)+d(z,f^{kn}(p))<\frac{\epsilon}{2}+\frac{\epsilon}{2}=\epsilon$. Thus, $f^{kn}(p)\in B(x,\epsilon)$ and since $d(x,y)>2\epsilon$, $y\notin \overline{O_{f^k}(p)}$. Thus, $f^k$ is not minimal. As $f^k$ is transitive, there is $q\in X$ such that $\overline{O_{f^k}(q)}=X$. Since $f^k$ is not minimal, $q\notin M(f^k)$, and hence, $q\notin M(f)$. Since every point of a minimal system is syndetically recurrent, $f$ cannot be minimal.     
\end{proof}

Then by Lemma 3.2 \cite{FG}, a map with ergodic shadowing cannot be minimal. In contrast, the following result shows that syndetically recurrent points are present everywhere in the phase space of such a system.     

\begin{theorem}
If $f$ has ergodic shadowing, then $\overline{M(f)}=X$. 
\end{theorem}   

\begin{proof}
It is similar to the proof given in Theorem 3.2 \cite{GD} using Lemma 3.1, 3.2 of \cite{FG}.  
\end{proof}

Then Proposition \ref{2.2.4} and Theorem 7.2 \cite{WOC} imply that a map with ergodic shadowing is totally syndetically transitive. 

\begin{theorem}
A minimal system with mean ergodic shadowing is weakly mixing. 
\end{theorem}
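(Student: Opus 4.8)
I would argue by contradiction, combining Theorem~\ref{Main} with the classical structure theory of minimal systems. Suppose $(X,f)$ is minimal and has mean ergodic shadowing but is \emph{not} weakly mixing, i.e.\ $f\times f$ is not transitive. For a minimal system the failure of weak mixing is equivalent to the existence of a non-trivial equicontinuous factor; thus the maximal equicontinuous factor $\pi\colon(X,f)\to(Z,g)$, the largest equicontinuous image of $(X,f)$, is non-trivial, so $Z$ has at least two points and $(Z,g)$ is minimal and equicontinuous. This structure-theoretic equivalence is the one external input I would invoke, citing the standard references on minimal flows.

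Before using it I would record two elementary facts about $\pi$. First, an equicontinuous system is distal, so $Prox(g)=\Delta(Z)$. Second, writing $\rho$ for the metric on $Z$, the map $\pi$ is uniformly continuous on the compact space $X$ and satisfies $g\circ\pi=\pi\circ f$; hence $\pi$ carries proximal pairs to proximal pairs. Indeed, if $\liminf_{n}d(f^{n}(a),f^{n}(b))=0$, pick $n_k$ with $d(f^{n_k}(a),f^{n_k}(b))\to0$; then $\rho(g^{n_k}(\pi a),g^{n_k}(\pi b))=\rho(\pi f^{n_k}(a),\pi f^{n_k}(b))\to0$ by uniform continuity of $\pi$, so $(\pi a,\pi b)\in Prox(g)$.

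The contradiction then comes from feeding a suitable pair into Theorem~\ref{Main}. Since $Z$ has at least two points and $\pi$ is onto, I can choose $x,y\in X$ with $\pi(x)\neq\pi(y)$, so in particular $x\neq y$. The construction in the proof of Theorem~\ref{Main} produces, for this distinct pair, a single point $z\in X$ with $(z,x),(z,y)\in Prox(f)$. Applying $\pi$ and using $Prox(g)=\Delta(Z)$ gives $\pi(z)=\pi(x)$ and $\pi(z)=\pi(y)$, whence $\pi(x)=\pi(y)$, contradicting the choice of $x,y$. Therefore no non-trivial equicontinuous factor can exist and $(X,f)$ is weakly mixing.

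The verifications that $\pi$ preserves proximality and that equicontinuity forces distality are routine, and the non-distality of $X$ that drives the argument is already packaged inside Theorem~\ref{Main} (in fact inside its proof, which yields a common proximal point for an arbitrary pair rather than merely one non-diagonal proximal pair). The genuine content I am borrowing is the equivalence, for minimal systems, between failure of weak mixing and existence of a non-trivial equicontinuous factor. I expect the main obstacle to be invoking this equivalence at the correct level of generality: it is classically phrased for homeomorphisms (group actions), whereas a minimal map is only guaranteed to be surjective, not invertible. I would address this by citing the version of the maximal-equicontinuous-factor construction valid for surjective continuous maps on compact metric spaces; this is the one point where care is needed, the remainder of the argument being formal.
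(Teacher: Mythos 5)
Your argument is correct in substance, but it takes a genuinely different route from the paper's. The paper's entire proof is a two-step reduction inside shadowing theory: Proposition \ref{2.2.4} gives that mean ergodic shadowing implies $\underline{d}$-shadowing, and Theorem 2.8 of \cite{DH} states that a minimal system with $\underline{d}$-shadowing is weakly mixing. You instead play the structure theory of minimal systems against Theorem \ref{Main}: if the system were not weakly mixing it would admit a non-trivial equicontinuous (hence distal) factor $\pi\colon (X,f)\to (Z,g)$; choosing $x,y$ with $\pi(x)\neq\pi(y)$ and invoking the construction in the proof of Theorem \ref{Main}, which produces a single point $z$ proximal to both $x$ and $y$, you get $\pi(x)=\pi(z)=\pi(y)$, a contradiction. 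Your two caveats are exactly the right ones. First, the statement of Theorem \ref{Main} (one non-diagonal proximal pair somewhere) would not suffice; you genuinely need its proof, and with it you inherit the paper's own delicate step there, since the shadowing point constructed in that proof depends on $\epsilon$, and the concluding ``since $\epsilon$ is arbitrary'' really calls for an additional diagonal or compactness argument. Second, the equivalence ``minimal and not weakly mixing if and only if there is a non-trivial equicontinuous factor'' is, in the direction you need, the hard part of the maximal-equicontinuous-factor theorem; it is classical for minimal homeomorphisms (via the regionally proximal relation) but must be cited in a form valid for non-invertible surjective minimal maps, e.g.\ by passing to the natural extension, which preserves minimality. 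What the two approaches buy: the paper's proof is shorter and stays entirely within the shadowing literature, but it is opaque, outsourcing all dynamics to \cite{DH}; yours trades that dependence for classical structure theory and makes the mechanism visible --- mean ergodic shadowing manufactures proximal pairs, and proximality collapses any distal factor.
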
 

\begin{proof}
It follows from Theorem 7.10 \cite{WOC} and Proposition \ref{2.2.4}.    
\end{proof} 

\section{Mean Ergodic Shadowing and Orbital Convergence}    

In this section, we study the behaviour of mean ergodic shadowing under orbital convergence of sequence of maps with mean ergodic shadowing.   
\medskip

Define $MESh(f,x,\epsilon)=\lbrace \delta>0|$ every $\delta$-ergodic pseudo orbit for $f$ through $x$ is $\epsilon$-shadowed in average by some point in $X\rbrace$.  
\medskip

The following notion of convergence is stronger than uniform convergence. Therefore, the limit function $f$ turns out to be continuous. 

\begin{Definition}
Let $X$ be a compact metric space and for each $n\in\mathbb{N}$, $f_n:X\rightarrow X$ be continuous maps. Then $\lbrace f_n\rbrace_{n\in\mathbb{N}}$ is said to be orbitally convergent to $f:X\rightarrow X$ if for every $\epsilon>0$ there exists a natural number $N$ such that $d(f_n^k(x),f^k(x))<\epsilon$ for all $n\geq N$, all $x\in X$ and all $k\in\mathbb{N}$.  
\end{Definition}   

\begin{theorem}
Let $\lbrace f_n\rbrace_{n\in\mathbb{N}}$ be a sequence of continuous maps on a compact metric space $X$. If $\lbrace f_n\rbrace$ is orbitally converging to $f$, then $f$ has mean ergodic shadowing if and only if $\bigcup_{m\geq 1} \bigcap_{n\geq m} MESh(f_n,x,\epsilon)\neq \phi$ for all $x\in X$ and for every $\epsilon>0$.      
\end{theorem}  

\begin{proof}
Suppose that $f$ has the mean ergodic shadowing. Then, we want to show that $\bigcup_{m\geq 1} \bigcap_{n\geq m} MESh(f_n,x,\epsilon)\neq \phi$ for all $x\in X$. Let us fix $x\in X$ and $\epsilon>0$. Further, let $0<\delta<\frac{\epsilon}{2}$ be given for $\frac{\epsilon}{2}$ by the mean ergodic shadowing of $f$. So, every $\delta$-ergodic pseudo orbit through $x$ is $\frac{\epsilon}{2}$-shadowed in average by some point in $X$. Since $\lbrace f_n\rbrace$ is orbitally converging to $f$, we choose $M\geq 1$ such that $d(f_m^k(z),f^k(z))<\frac{\delta}{2}$ for each $m\geq M$, for all $z\in X$ and for each $k\in \mathbb{N}^+$. For a fixed $m\geq M$, let us consider a $\frac{\delta}{2}$-ergodic pseudo orbit $\gamma=\lbrace x_i\rbrace_{i\in\mathbb{N}}$ for $f_m$ through $x$. Let $N\subset \mathbb{N}$ with $d(N)=0$ be such that $d(f_m(x_i),x_{i+1})<\frac{\delta}{2}$ for all $i\in \mathbb{N}\setminus N$. Then $d(f(x_i),x_{i+1})\leq d(f(x_i),f_m(x_i))+d(f_m(x_i),x_{i+1})<\frac{\delta}{2}+\frac{\delta}{2}=\delta$ for all $i\in \mathbb{N}\setminus N$. Thus, $\gamma$ is a $\delta$-ergodic pseudo orbit for $f$ through $x$. So, there exists $y\in X$ such that lim sup$_{n\to\infty}\frac{1}{n}\Sigma_{i=0}^{n-1}d(f^i(y),x_i)<\frac{\epsilon}{2}$. Therefore, 
\medskip

lim sup$_{n\to \infty} \frac{1}{n} \Sigma_{i=0}^{n-1} d(f_m^i(y),x_i)$ 

$\leq$ lim sup$_{n\to\infty} \frac{1}{n}\Sigma_{i=0}^{n-1} d(f_m^i(y),f^i(y))+$ lim sup$_{n\to\infty} \Sigma_{i=0}^{n-1} d(f^i(y),x_i)$  

$<\frac{\epsilon}{2}+\frac{\epsilon}{2}=\epsilon$. 
\medskip

Thus, every $\frac{\delta}{2}$-ergodic pseudo orbit for $f_m$ through $x$ is $\epsilon$-shadowed in average by some point in $X$. Since $m\geq M$ was arbitrary, we conclude that $\frac{\delta}{2}\in \bigcap_{n\geq m} MESh(f_n,x,\epsilon)$. Since $x$ and $\epsilon$ was arbitrary, we have $\bigcup_{m\geq 1} \bigcap_{n\geq m} MESh(f_n,x,\epsilon)\neq \phi$ for all $x\in X$ and for every $\epsilon>0$. This proves the direct implication. 
\medskip

To prove the reverse implication, let us assume that $\bigcup_{m\geq 1} \bigcap_{n\geq m} MESh(f_n,x,\frac{\epsilon}{2})\neq \phi$ for given $\epsilon>0$ and given $x\in X$. We choose $M$ such that $\bigcap_{n\geq M} MESh(f_n,x,\frac{\epsilon}{2})\neq \phi$. So, there exists $0<\delta_x<\epsilon$ such that for each $m\geq M$, every $\delta_x$-ergodic pseudo orbit for $f_m$ through $x$ can be $\frac{\epsilon}{2}$-shadowed in average by some point in $X$. Since $\lbrace f_n\rbrace$ converges orbitally to $f$, we can choose $N\geq M$ such that $d(f_m^k(z),f^k(z))<\frac{\delta_x}{2}$ for each $m\geq N$, for all $z\in X$ and for each $k\in \mathbb{N}^+$. Let $\gamma=\lbrace x_i\rbrace_{i\in\mathbb{N}}$ be a $\frac{\delta_x}{2}$-ergodic pseudo orbit for $f$ through $x$, i.e., there exists $K\subset \mathbb{N}$ with $d(K)=0$ such that $d(f(x_i),x_{i+1})<\frac{\delta_x}{2}$ for all $i\in\mathbb{N}\setminus K$. Then $d(f_N(x_i),x_{i+1})\leq d(f_N(x_i),f(x_i))+d(f(x_i),x_{i+1})<\frac{\delta_x}{2}+\frac{\delta_x}{2}=\delta_x$ for all $i\in \mathbb{N}\setminus K$. Thus, $\gamma$ is a $\delta_x$-ergodic pseudo orbit for $f_N$ through $x$ and hence, can be $\frac{\epsilon}{2}$-shadowed in average by some point in $X$. So, there exists $y\in X$ such that lim sup$_{n\to\infty}\frac{1}{n}\Sigma_{i=0}^{n-1}d(f_N^i(y),x_i)<\frac{\epsilon}{2}$. Therefore,  
\medskip

lim sup$_{n\to \infty} \frac{1}{n} \Sigma_{i=0}^{n-1} d(f^i(y),x_i)$ 

$\leq$ lim sup$_{n\to\infty} \frac{1}{n}\Sigma_{i=0}^{n-1} d(f^i(y),f_N^i(z))+$ lim sup$_{n\to\infty} \Sigma_{i=0}^{n-1} d(f_N^i(y),x_i)$ 

$<\frac{\epsilon}{2}+\frac{\epsilon}{2}=\epsilon$.  
\medskip

Thus, for every $\epsilon>0$ there exists $\delta_x>0$ such that every $\delta_x$-ergodic pseudo orbit through $x$ is $\epsilon$-shadowed in average by some point in $X$. This would imply that for every $\epsilon>0$ there exists $\delta_x>0$ such that every $\delta_x$-ergodic pseudo orbit through $B[x,\delta_x]$ is $\epsilon$-shadowed in average by some point in $X$. If not, suppose there exists $\epsilon>0$ for which there does not exist any $\delta>0$ such that every $\delta$-ergodic pseudo orbit is $\epsilon$-shadowed in average by some point in $X$. Thus for each $k\geq 1$, we can choose $\frac{1}{k}$-ergodic pseudo orbit $\eta^k=\lbrace \eta_n^k\rbrace_{n\in\mathbb{N}}$ with $\eta_0^k\in B[x,\frac{1}{k}]$ such that $\eta^k$ cannot be $\epsilon$-shadowed in average by some point in $X$. By compactness of $X$ we choose $K$ large enough so that $d(f(\eta_0^K),f(x))\leq \frac{\delta_x}{2}$ and $\frac{1}{K}\leq \frac{\delta_x}{2}$. Let $N\subset \mathbb{N}$ with $\overline{d}(N)=0$ such that $d(f(\eta_i^K),\eta_{i+1}^K)\leq \frac{1}{K}$ for all $i\in\mathbb{N}\setminus N$.    

Define $Y=\lbrace y_n\rbrace_{n\in\mathbb{N}}$ by \[y_n=\begin{cases}
x & \textnormal {if $n=0$}  
\\
\eta_n^K & \textnormal {if $n\geq 1$}      
\end{cases}\] 

For $i\in \mathbb{N}^+\setminus N$, $d(f(y_i),y_{i+1})=d(f(\eta_i^K),\eta_{i+1}^K)\leq \frac{1}{K}\leq \frac{\delta_x}{2}<\delta_x$.  
\medskip

Thus, $Y=\lbrace y_n\rbrace_{n\in\mathbb{N}}$ is a $\delta_x$-ergodic pseudo orbit for $f$ through $x$. So, we can choose $z\in X$ such that lim sup$_{n\to\infty} \frac{1}{n}\Sigma_{i=0}^{n-1} d(f^i(z),y_i)<\epsilon$. Then,  
\medskip

lim sup$_{n\to\infty} \frac{1}{n}\Sigma_{i=0}^{n-1} d(f^i(z),\eta_i^K)$ 

$=$lim sup$_{n\to\infty} \frac{1}{n} d(z,\eta_0^K)+$lim sup$_{n\to\infty} \frac{1}{n} \Sigma_{i=1}^{n-1} d(f^i(z),\eta_i^K)<0+\epsilon=\epsilon$.    
\medskip

It follows that $\lbrace \eta_i^K\rbrace_{i\in\mathbb{N}}$ is $\epsilon$-shadowed in average by some point in $X$. This contradiction makes the assumption false.  
\medskip

Now, for each $x\in X$ and every $\epsilon>0$, there exists $\delta_x>0$ such that every $\delta_x$-ergodic pseudo orbit through $B[x,\delta_x]$ is $\epsilon$-shadowed in average by some point in $X$. By compactness of $X$, we have $X=\bigcup_{i=1}^m B[x_i,\delta_i]$, where $\delta_i=\delta_{x_i}$ for $1\leq i\leq m$. Choose $\delta=$min$_{1\leq i\leq m} \lbrace \delta_i\rbrace$ and let $\lbrace y_n\rbrace_{n\in\mathbb{N}}$ be a $\delta$-ergodic pseudo orbit for $f$. Clearly, $y_0\in B[x_i,\delta_i]$ for some $1\leq i\leq n$. Thus, $\lbrace y_n\rbrace_{n\in\mathbb{N}}$ is a $\delta_i$-ergodic pseudo orbit for $f$ through $B[x_i,\delta_i]$. So, one can find $y\in X$ such that lim sup$_{n\to\infty} \frac{1}{n} \Sigma_{i=0}^{n-1} d(f^i(y),y_i)<\epsilon$. This completes a proof of the fact that $f$ has the mean ergodic shadowing property.   
\end{proof} 

\textit{Acknowledgement:} {\footnotesize Part of this research was carried out while the first author was pursuing PhD in the Department of Mathematics, University of Delhi, India, supported by the Department of Science \& Technology, Government of India, under INSPIRE Fellowship (Registration No.-IF150210) Program}.

\end{document}